\newtheorem{theorem}{Theorem}
\newtheorem{lemma}[theorem]{Lemma}
\newdefinition{definition}{Definition}
\newdefinition{assumption}{Assumption}
\newdefinition{remark}{Remark}
\newproof{proof}{Proof}
\providecommand{\citep}[1]{(\cite{#1})}
\newcommand{\pr}[1]{(#1(t), \, 0\leq t\leq 1)}
\newcommand{\cA}{\mathcal{A}}
\newcommand{\cF}{\mathcal{F}}
\newcommand{\cG}{\mathcal{G}}
\newcommand{\bF}{\mathbb{F}}
\newcommand{\bG}{\mathbb{G}}
\newcommand{\bOne}{\mathbbm{1}}
\newcommand{\bR}{\mathbb{R}}
\newcommand{\bV}{\mathbb{V}}
\newcommand{\PP}{\boldsymbol{P}}
\newcommand{\EE}{\boldsymbol{E}}
\newcommand{\defeq}{\coloneqq}
\providecommand{\keywords}[1]
{
  \small	
  \textbf{\textit{Keywords---}} #1
}
\begin{document}

\begin{frontmatter}

\title{A short note on ``Anticipative portfolio optimization''}

\author[uc3m,nyuad]{B.~D'Auria\fnref{fn1}\corref{cor1}}
\ead{bernardo.dauria@uc3m.es}
\author[uc3m]{J.-A.~Salmer\'on\fnref{fn1}}
\ead{joseantonio.salmeron@uc3m.es}

\cortext[cor1]{Corresponding author}
\fntext[fn1]{This research was partially supported by the Spanish Ministry of Economy and Competitiveness Grants  MTM2017-85618-P (via FEDER funds) and MTM2015--72907--EXP.}

\address[uc3m]{Dpto. Estad\'{\i}stica, Universidad Carlos III de Madrid.
Avda.\ de la Universidad 30, 28911, Legan\'es (Madrid) Spain}
\address[nyuad]{Science division, New York University of Abu Dhabi,
Saadiyat Island, P.O. box 129188, Abu Dhabi, UAE}
\begin{abstract}
    In \cite{karatzas2}, Pikovsky and Karatzas did one of the earliest studies on portfolio optimization problems in presence of insider information. They were able to successfully show that the knowledge of the stock price at future time is an insider information with associated unbounded value. However when the insider information only gives an interval containing the future value of the stock price, they could not prove that the value of the information is finite. They made a conjecture of this result and tried to convince about its validity by showing some numerical approximations. Instead of applying more sophisticated results, such as Shannon’s information theory, we show that their conjecture holds true by giving a simple proof of the finiteness of the value of the insider information for this case.\\
    
    \keywords{Optimal portfolio, Enlargement of filtrations, Value of the information.}
\end{abstract}

\end{frontmatter}
\section*{Introduction}
The problem of the optimal portfolio involves choosing the allocation of a certain capital among certain assets in order to maximize the expectation of profits in a finite time horizon. It is one of the most commonly optimization problems used in economics and finance. By assets, we refer to stocks of listed companies and bank bonds under a deterministic or stochastic interest short rate.

Sometimes it can be the case that the information available to the agent to define her strategy is greater than the one generated by the historical data of the market. 
In these cases, we say that the investor has additional or privileged information, that may include knowledge about future market behavior. Knowing this information leads to an increase in the expected gains of the investor. 
Pikovsky and Karatzas enunciate and solve some basic problems of insider trading by using enlargement of filtration techniques. 
First, the authors assume that the price of a stock is known at a future time and prove that, in this context, the agent has infinite expected wealth. 
After that, they weaken the information owned by the agent, and assume that she only knows a lower (or upper) bound of the future value of the stock price and they show that her profits are limited. 
Finally they show an example where the agent knows a finite interval that contains the future value of the stock price and state that also for this case  the expected profits remain finite. 
However, they are not able to provide an analytic proof of this result and use simulations to conclude that it should hold true. 
In this short note, we give a technical proof of that result, so showing their Conjecture 4.9 in \cite{karatzas2} holds true.
\begin{remark}
Another proof of this result can be found in \cite{Amendinger98}, where the more general framework of Entropy theory is presented.  Karatzas and Pikovsky, as indicated in \citep[Remark (2.5)]{karatzas2}, were already aware of possible relationships existing between the price of the information and the relative entropy. However they intended to solve their conjeture by simple analytical techniques and without using more powerful results. By this work we fulfilled this attempt.
\end{remark}
\section*{Notation and Main Results}
Trying to strictly follow the notation in \cite{karatzas2},
we assume to work in a filtered probability space
$(\Omega, \cF,\bF,\PP)$
where $\cF$ is the event sigma-algebra, and $\bF=\{\cF_t, 0\leq t\leq 1\}$ is an augmented filtration that is generated by the natural filtration of a Brownian motion $w = (w(t), 0\leq t \leq 1)$.
The portfolio is made of only two assets, one risky, that we call $P_1=\pr{{P_1}}$ and the other risk-less $P_0=\pr{{P_0}}$,
and both processes are considered adapted semi-martingales in the the filtration $\bF$.
In particular, their dynamics are defined by the following SDEs,
\begin{subequations}\label{DS.SDE}
\begin{align}
dP_0(t) &= P_0(t) \, r(t) \, dt, 
\label{D.SDE}\\
dP_1(t) &= P_1(t)  \left(b(t) dt + \sigma(t) \, dw(t)\right)   \label{S.SDE}
\end{align}
\end{subequations}
where $r$ is the interest rate, $b$ is the drift and $\sigma$ is the volatility of the risky asset. 
They are assumed to be bounded deterministic function. In particular, we remark that there exist $s_1,\,s_2\in\bR^+$ 
such that $0<s_1\leq \sigma(u)\leq s_2$ 
holds true for all $0\leq u \leq 1$ \citep[Equation (4.23)]{karatzas2}.

With the previous set-up, we assume that an agent can control her portfolio by a given self-financial strategy $\pi=\pr{\pi}$,
in order to optimize her utility function at a finite terminal time.
If we denote by $X^\pi=\pr{X^\pi}$ the wealth of the portfolio of the investor under her strategy $\pi$,
its dynamics are given by the following SDE, for $0 \leq t \leq 1$,

\begin{align}\label{X.SDE}
dX^{\pi}(t) &= (1-\pi(t))X^{\pi}(t)r(t) \, dt + \pi(t) X^{\pi}(t)\left(b(t) \, dt + \sigma(t) \, dw(t)\right) \ ,
\quad X(0) = x \ .
\end{align}

Usually, it is assumed that the optimal strategy uses all the information at disposal of the agent at each instant,
and in general we assume that the agent's flow of information, modeled by the filtration $\bG=\pr{\cG}$,
is possibly larger than filtration $\bF$, that is  $\bF \subset \bG$. In particular, we consider that the extra information is given by a random variable $\mathbf{L}$ that is $\cF_1$-measurable and  define the following initial enlargement of filtration
$$ \cG_t = \cF_t\bigvee\sigma(\mathbf{L}),\ \ 0\leq t\leq 1. $$

In this new filtration the process $w$ is not a $\bG$-Brownian motion, therefore it is necessary to compute its semi-martingale decomposition with respect to a $\bG$-Brownian motion before solving the optimization problem. 
In \cite[Lemma 2.3]{karatzas2} the following result is proved, that we give here adapted for the special case of a single risky asset.
\begin{lemma}[Characterization Lemma]
Assume that for the given $\cF_1$-measurable random variable $\mathbf{L}$, we can find a measurable process $\alpha:[0,1)\times\Omega\longrightarrow \bR$, such that:
\begin{enumerate}
    \item $\alpha$ is adapted to $\bG = \{ \cG_t \}_{0\leq t\leq 1}$, with $\cG_t = \cF_t \bigvee\sigma(\mathbf{L})$.
    \item the process $\tilde w(t) = w(t) -\int_0^t \alpha(u) \, du$ is a $\bG$-BM on $[0,1)$.
    \item $\EE \int_0^T \alpha^2(s) ds < +\infty$ for any $T<1$.
\end{enumerate}
For $T\in (0,1]$, let $\mathcal{A}(\bG, T)$ (resp. $\mathcal{A}(\bF, T)$) be the class of $\bG$ (resp. $\bF$)-adapted processes 
$\pi:[0,T]\times \Omega\longrightarrow\bR$ with $\int_0^T (\sigma(t) \pi(t))^2 dt < +\infty$, almost surely, and let
\begin{align}
    \bV_T^{\bF} &\defeq \sup_{\pi\in\cA_{\bF}} \EE[\log(X^{\pi}_{T})]\ ,\\
    \bV_T^{\bG} &\defeq \sup_{\pi\in\cA_{\bG}} \EE[\log(X^{\pi}_{T})]\ ,
\end{align}
denote the values of the portfolio optimization problem over these two respective classes (i.e. with or without anticipation of the terminal value $\mathbf{L}$, respectively). Then
\begin{align}
    \bV_T^{\bF} &= \log(x) + \EE\int_0^T \left( r(u) + \frac{1}{2} \left(\frac{b(u)-r(u)}{\sigma(u)}\right)^2 \right) \, du \ ,\\
    \bV_T^{\bG} &=  \bV_T^{\bF} + \frac{1}{2}\EE\int_0^T \alpha^2(u) \, du,\ \ 0<T\leq 1.
\end{align}
and thus
\begin{equation}
    V_1^{\bG} < \infty \Longleftrightarrow \EE\int_0^1 \alpha^2(u) \, du < \infty \ . \label{finite.VG}
\end{equation}
When this latter condition is satisfied, an optimal portfolio is given by
\begin{equation}
    \pi^*(t) = \frac{b(t)-r(t)}{\sigma^2(t)} + \frac{\alpha(t)}{\sigma(t)} \ .
\end{equation}
\end{lemma}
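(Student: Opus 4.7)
The plan is to apply It\^o's formula to $\log X^\pi$ in each filtration, take expectations to kill the stochastic integral, and then maximise the resulting drift integrand pointwise in $\pi$. For $\bV_T^{\bF}$, It\^o applied to \eqref{X.SDE} yields
\[
    d\log X^\pi(t) = \Bigl( r(t) + \pi(t)(b(t)-r(t)) - \tfrac12 \pi^2(t)\sigma^2(t)\Bigr)\,dt + \pi(t)\sigma(t)\,dw(t).
\]
Once the stochastic integral is shown to be a true martingale (by localising at $\tau_n=\inf\{t:\int_0^t\pi^2\sigma^2\,du\geq n\}$ and then passing to the limit by monotone/dominated convergence using the deterministic bounds on $r,b,\sigma$), taking expectations and maximising $r+\pi(b-r)-\tfrac12\pi^2\sigma^2$ pointwise at $\pi^\star_{\bF}(u)=(b-r)/\sigma^2$ delivers the stated formula for $\bV_T^{\bF}$.

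For $\bV_T^{\bG}$, the hypothesis $w=\tilde w+\int_0^{\cdot}\alpha\,du$ rewrites \eqref{X.SDE} as a $\bG$-semi-martingale with drift enlarged by $\pi\sigma\alpha$ and martingale part $\pi\sigma\,d\tilde w$. The same It\^o-plus-expectation argument then gives
\[
    \EE[\log X^\pi(T)] = \log x + \EE\int_0^T \Bigl( r + \pi(b-r) + \pi\sigma\alpha - \tfrac12\pi^2\sigma^2\Bigr)\,du,
\]
and pointwise maximisation in $\pi$ produces the candidate $\pi^\star(u)=(b-r)/\sigma^2+\alpha/\sigma$. Plugging back and regrouping, the optimal value equals $\bV_T^{\bF}+\EE\int_0^T (b-r)\alpha/\sigma\,du + \tfrac12\EE\int_0^T\alpha^2\,du$, so the claimed identity reduces to showing that the cross term vanishes.

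To kill that cross term, I would invoke Fubini (the integrand $(b-r)\alpha/\sigma$ is integrable by condition~3 together with the deterministic bounds on $b,r,\sigma$) combined with the observation that $\EE[\alpha(u)]=0$ for a.e.\ $u\in[0,T]$: indeed $\EE[w(t)]=\EE[\tilde w(t)]=0$ since $w$ is an $\bF$-BM and $\tilde w$ is a $\bG$-BM, so $\int_0^t\EE[\alpha(u)]\,du=\EE[w(t)-\tilde w(t)]=0$ for every $t$, forcing $\EE[\alpha(u)]=0$ a.e. The equivalence \eqref{finite.VG} and the announced form of $\pi^\star$ then follow immediately from the formula for $\bV_T^{\bG}$ and the evident finiteness of $\bV_1^{\bF}$. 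The main technical obstacle is the martingale justification used throughout; it is delicate at $T=1$ because condition~3 only controls $\int_0^T\alpha^2\,du$ for $T<1$, so I would first prove the identity for arbitrary $T<1$ and then pass to $T=1$ by monotone convergence on both sides, which is precisely what makes \eqref{finite.VG} a statement about integrability of $\alpha^2$ on the whole interval.
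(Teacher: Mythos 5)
Before comparing approaches, note that the paper itself does not prove this lemma: it is imported verbatim (specialised to one risky asset) from Lemma~2.3 of \cite{karatzas2}, so your attempt can only be measured against the classical verification argument given there. Your outline is essentially that argument: It\^o's formula for $\log X^\pi$ in each filtration, pointwise maximisation of the concave-in-$\pi$ drift integrand, and the identification of the optimiser $\pi^*=(b-r)/\sigma^2+\alpha/\sigma$. Your treatment of the cross term is a correct and necessary ingredient: because $b,r,\sigma$ are deterministic the cross term equals $\int_0^T\frac{b(u)-r(u)}{\sigma(u)}\,\EE[\alpha(u)]\,du$, and $\EE[\alpha(u)]=0$ a.e.\ does follow from $\EE[w(t)]=\EE[\tilde w(t)]=0$ together with Fubini, whose hypotheses are secured by Cauchy--Schwarz and condition~3 on $[0,T]$ with $T<1$. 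The passage $T\uparrow 1$ is also sound: extending any strategy by $\pi\equiv 0$ on $(T,1]$ gives $\bV_1^{\bG}\geq \bV_T^{\bG}+\int_T^1 r(u)\,du$, so the lower bound follows by monotone convergence, while the upper bound is only needed when $\EE\int_0^1\alpha^2(u)\,du<\infty$, in which case the computation runs directly on all of $[0,1]$.

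The genuine gap is the sentence ``once the stochastic integral is shown to be a true martingale (by localising at $\tau_n$ and then passing to the limit by monotone/dominated convergence using the deterministic bounds on $r,b,\sigma$)''. The class $\cA(\bG,T)$ only requires $\int_0^T(\sigma(t)\pi(t))^2\,dt<\infty$ \emph{almost surely}, not in expectation, so for a general admissible $\pi$ the integral $\int_0^\cdot\pi\sigma\,d\tilde w$ is a strict local martingale; the deterministic bounds on $r,b,\sigma$ do not control $\pi$ or $\alpha$, there is no dominating function, and the localised expectations $\EE\int_0^{T\wedge\tau_n}\pi\,(b-r+\sigma\alpha)\,du$ need not converge to anything useful. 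The standard repair --- and the one you should substitute --- avoids proving any martingale property for suboptimal $\pi$. Writing $\theta=(b-r)/\sigma$ and completing the square, $\log X^\pi(T)=\log x+\int_0^T\bigl(r+\tfrac12(\theta+\alpha)^2\bigr)du-\tfrac12\int_0^T\bigl(\pi\sigma-(\theta+\alpha)\bigr)^2du+\int_0^T\pi\sigma\,d\tilde w$. The exponential of $\int_0^T\bigl(\pi\sigma-(\theta+\alpha)\bigr)d\tilde w-\tfrac12\int_0^T\bigl(\pi\sigma-(\theta+\alpha)\bigr)^2du$ is a nonnegative local martingale, hence a supermartingale with expectation at most one, and Jensen's inequality makes the expectation of its logarithm nonpositive; since condition~3 and the boundedness of $\theta$ make $\int_0^\cdot(\theta+\alpha)\,d\tilde w$ a genuine $L^2$ martingale for $T<1$, adding its (zero) expectation gives $\EE[\log X^\pi(T)]\leq\log x+\EE\int_0^T\bigl(r+\tfrac12(\theta+\alpha)^2\bigr)du$ for every admissible $\pi$. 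The reverse inequality is then obtained by evaluating at $\pi^*$, for which $\EE\int_0^T(\sigma\pi^*)^2\,du<\infty$ when $T<1$, so that there the stochastic integral really is a true martingale and your expectation step is legitimate. With that substitution the proof is complete; as written, the upper-bound half of both value formulas is unsupported.
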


\section*{Finiteness of the value of the insider information for a finite interval.}
We focus, in this note, on the case when the price is one-dimensional and the extra information is given by the following random variable,
$$\mathbf{L} =  \bOne \{ P_1(1)\in [p_1,p_2)\},$$
which gives us an upper and a lower bound for the stock price at the end of the time horizon. This is the same setting as in \cite[Example 4.8]{karatzas2}.

We can solve easily the equation related to the asset $P_1$ and get
$$ P_1(1) = P_1(0)\exp \left( \int_0^1 (b(u)-\sigma^2(u)/2) \, du + \int_0^1 \sigma(u) \, dw(u) \right), $$
so that we can express the variable $\mathbf{L}$ in terms of the process $w$
as $\mathbf{L} =  \bOne \{ \int_0^1\sigma(u) \, dw(u) \in[c_1,c_2)\}$ where, for $i \in {1,2}$, we define
$$ c_i = \log(p_i/P(0)) + \int_0^1 \left( \frac{\sigma^2(u)}{2}-b(u)  \right) \, du \ .$$
The technical proof gives the crucial ingredient for the proof of the main result.
\begin{lemma}\label{lem:bound.I.finite}
The integral of the function $I(x,t)$ defined as 
\begin{equation}
    I(x,t) = \frac{ \left[ \Phi'(z_1)-\Phi'(z_2)\right]^2 }{\sqrt{\rho-\tau}\left[\Phi\left( z_2  \right)  - \Phi\left( z_1  \right)\right] \left[\Phi\left( -z_2  \right)  + \Phi\left( z_1  \right)\right]}, \label{def.I}
\end{equation}
in the variable $x\in\bR$ is uniformly bounded for $t\in[0,1]$.
In \eqref{def.I}, we have used the following definitions
$\rho = T(1)$, $\tau = T(t)$, with $T(t) = \int_0^t\sigma^2(u) \, du$, $z_2 = {(c_2-x)}/{\sqrt{\rho-\tau}}$ and 
$z_1 = {(c_1-x)}/{\sqrt{\rho-\tau}}$. 
\end{lemma}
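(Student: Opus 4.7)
The first step is a substitution that makes the integrand depend on only one parameter. Setting $u = z_1 = (c_1-x)/\sqrt{\rho-\tau}$, so that $z_2 = u + d$ with $d \defeq (c_2-c_1)/\sqrt{\rho-\tau}$, and using $\Phi(-y)=1-\Phi(y)$, the factors of $\sqrt{\rho-\tau}$ cancel and $\int_\bR I(x,t)\,dx$ becomes
\[
J(d) = \int_{\bR} \frac{[\varphi(u) - \varphi(u+d)]^2}{[\Phi(u+d)-\Phi(u)]\,[1-\Phi(u+d)+\Phi(u)]}\, du,
\]
where $\varphi = \Phi'$. Writing $f(u) \defeq \Phi(u+d)-\Phi(u) \in (0,1)$, the integrand equals $(f')^2/[f(1-f)]$, which is the pointwise Fisher information of the Bernoulli$(f(u))$ family parameterized by $u$. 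Since $\sqrt{\rho-\tau} \in (0,\sqrt{\rho}]$ as $t \in [0,1)$, the parameter $d$ lies in $[d_0,\infty)$ with $d_0 = (c_2-c_1)/\sqrt{\rho}>0$, and the lemma reduces to proving $\sup_{d\geq d_0} J(d) < \infty$.

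Next, I would exploit the reflection $u \mapsto -u-d$, under which $f(u)$, $1-f(u)$ and $(f'(u))^2$ are each invariant (a direct check via $\Phi(-y)=1-\Phi(y)$), to write $J(d) = 2\int_{-d/2}^\infty(\cdots)\,du$. On this half-line $|u+d|\geq|u|$ gives $\varphi(u+d)\leq\varphi(u)$ and hence $|f'(u)|\leq\varphi(u)$, so it is enough to produce a $d$-independent integrable majorant for $\varphi(u)^2/[f(u)(1-f(u))]$ on $[-d/2,\infty)$.

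I would build such a majorant by splitting $[-d/2,\infty)$ into three pieces. On $u\geq 0$, $1-f\geq\Phi(u)\geq 1/2$, while the mean value theorem together with the monotonicity of $\varphi$ on $[0,\infty)$ gives $f\geq\Phi(u+1)-\Phi(u)\geq\varphi(u+1)$, so the integrand is at most $2\varphi(u)\varphi(u)/\varphi(u+1)=2\varphi(u)e^{u+1/2}$, which is integrable. On the compact range $u\in[-1,0]$ both $f$ and $1-f$ are bounded below by positive constants uniformly in $d\geq d_0$, so the integrand is bounded on a bounded set. On $u\in[-d/2,-1]$, $f(u)\geq\Phi(d/2)-\Phi(-1)$ stays bounded away from $0$ for $d$ large, while the Mills-ratio lower bound $\Phi(u)\geq|u|\varphi(u)/(1+u^2)$ for $u<0$ yields $1-f(u)\geq|u|\varphi(u)/(1+u^2)$, and the integrand is thereby dominated by $C(1+u^2)\varphi(u)/|u|$, integrable on $(-\infty,-1]$.

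The main technical obstacle is exactly the region around the symmetry axis $u=-d/2$ where $1-f$ tends to $0$ as $d\to\infty$; a crude pointwise bound on $1/(1-f)$ would diverge. What rescues the estimate is that $f'$ vanishes on the same axis, and the Mills lower bound on $\Phi$ makes this cancellation quantitative. The residual range $d\in[d_0,2]$ is a compact set on which $J(d)$ is a proper integral, so $J$ is continuous and therefore bounded there; assembling the three regional bounds for $d\geq 2$ then delivers the uniform estimate $J(d)\leq M$ for every $d\in[d_0,\infty)$, which is what was required.
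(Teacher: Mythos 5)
Your argument is correct, but it takes a genuinely different route from the paper's. The paper makes three separate changes of variable (in $z_1$ on $(-\infty,c_1]$, in $z_2$ on $[c_2,\infty)$ and on $(c_1,c_2)$), splits the integrand through the identity $\tfrac{1}{AB}=\tfrac{1}{A}+\tfrac{1}{B}$, valid because $A=\Phi(z_2)-\Phi(z_1)$ and $B=\Phi(-z_2)+\Phi(z_1)$ sum to $1$, and then disposes of the six resulting integrals one by one, each time by a limit-comparison with $1/z^2$ carried out via L'Hopital's rule. You instead pass at once to the one-parameter family $J(d)$ with $d=s_t$, recognize the integrand as the Bernoulli Fisher information $(f')^2/[f(1-f)]$ with $f(u)=\Phi(u+d)-\Phi(u)$, use the global reflection symmetry about $u=-d/2$ (the paper invokes this symmetry only locally, on the middle interval), and then exhibit explicit $d$-free integrable majorants via the mean value theorem and the Mills-ratio bound. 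Your organization makes the uniformity in $t$ completely transparent and isolates the one genuinely delicate point --- the vanishing of $1-f$ near the symmetry axis, compensated quantitatively by the vanishing of $f'$ there --- which the paper's subdivision at $c_1$ and $c_2$ sidesteps rather than confronts. The only blemish is your last step: $J(d)$ is not a ``proper integral'' for $d\in[d_0,2]$, since the domain of integration is still unbounded, so continuity of $J$ on that compact range would itself need a dominated-convergence justification; but this is cosmetic, because your three regional majorants already hold for every $d\geq d_0$ once the unit increment in the mean-value step is replaced by $\min(d_0,1)$ and one observes that the region $[-d/2,-1]$ is empty when $d<2$, so the compactness appeal can simply be dropped.
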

\begin{proof}
We start by splitting $\bR$ in three intervals  $(-\infty,c_1]$, $(c_1,c_2)$ and $[c_2,\infty)$, then we prove that on each interval the integral is finite.

\textbf{Interval $(-\infty,c_1]$:}
We apply a change of variable in $z_1$ and express $z_2 = z_1 + (c_2-c_1)/\sqrt{\rho-\tau}$. 
We let $s_t:=(c_2-c_1)/\sqrt{\rho-\tau}$ and call its minimum in $t$ as $s_0>0$. 
We get
\begin{align}
\int_{-\infty}^{c_1}I(x,t)dx  &=  \int_0^{+\infty} \frac{\left[\Phi'(z_1)-\Phi'(z_1 + s_t)\right]^2  }{\left[\Phi\left( z_1 +s_t  \right)  - \Phi\left( z_1  \right)\right] \left[\Phi\left( -z_1 - s_t  \right)  + \Phi\left( z_1  \right)\right]} dz_1 \nonumber \\
&= \int_0^{+\infty} \left( \frac{ \left[\Phi'(z_1)-\Phi'(z_1 + s_t)\right]^2}{\Phi\left( z_1 +s_t  \right)  - \Phi\left( z_1  \right)} + \frac{\left[\Phi'(z_1)-\Phi'(z_1 + s_t)\right]^2}{\Phi\left( -z_1 - s_t  \right)  + \Phi\left( z_1  \right)}  \right) dz_1 \ .
\label{lower.semi.interval}\end{align}
We continue by showing that both terms are finite.
We first consider the first term.
\begin{align*}
\int_0^{+\infty}\frac{ \left[\Phi'(z_1)-\Phi'(z_1 + s_t)\right]^2}{\Phi\left( z_1 +s_t  \right)  - \Phi(z_1)} dz_1 &\leq \int_0^{+\infty}\frac{ \left[\Phi'(z_1)-\Phi'(z_1 + s_t)\right]^2}{\Phi\left( z_1 +s_0  \right)  - \Phi(z_1)} dz_1 \leq \int_0^{+\infty}\frac{ \left[\Phi'(z_1)\right]^2}{\Phi\left( z_1 +s_0  \right)  - \Phi(z_1)} dz_1\\
&= \int_0^{1}\frac{ \left[\Phi'(z_1)\right]^2}{\Phi\left( z_1 +s_0  \right)  - \Phi(z_1)} dz_1 + \int_1^{+\infty}\frac{ \left[\Phi'(z_1)\right]^2}{\Phi\left( z_1 +s_0  \right)  - \Phi(z_1)} dz_1 \ .
\end{align*}
The first integral is clearly bounded while, for the second one, we apply a comparison criteria with the function $f(z) = 1/z^2$, as follows
\begin{align*}
\lim_{z_1\to\infty} z_1^2 \frac{ \left[\Phi'(z_1)\right]^2}{\Phi\left( z_1 +s_0  \right)  - \Phi(z_1)} &= \lim_{z_1\to\infty} \frac{1}{\sqrt[]{2\pi}} \frac{z_1^2\left[\exp\left( -{z_1^2}/{2} \right) \right]^2}{\int_{z_1}^{z_1+s_0}\exp(-u^2/2) \, du}\\
&=\lim_{z_1\to\infty} \frac{1}{\sqrt[]{2\pi}} \frac{2z_1\left[\exp\left( -{z_1^2}/{2} \right) \right]^2 - 2z_1^3 \left[\exp\left( -{z_1^2}/{2} \right) \right]^2}{\exp(-(z_1+s_0)^2/2) - \exp(-z_1^2/2)}\\
&= \lim_{z_1\to\infty} \frac{1}{\sqrt[]{2\pi}} \frac{2z_1\exp\left( -{z_1^2}/{2} \right) - 2z_1^3 \exp\left( -{z_1^2}/{2} \right) }{\exp(-z_1s_0)\exp(-s_0^2/2) - 1}\to 0 \ .
\end{align*}
In the second equality above, we used L'Hopital Rule and we conclude that the integral is finite on $(1,+\infty)$. 

As for the second term in \eqref{lower.semi.interval}, we have the following bound,
\begin{align*}
\int_0^{+\infty} \frac{\left[\Phi'(z_1)-\Phi'(z_1 + s_t)\right]^2}{\Phi(-z_1-s_t)  + \Phi(z_1)} dz_1 &\leq \int_0^{+\infty} \frac{\left[\Phi'(z_1)-\Phi'(z_1 + s_t)\right]^2}{\Phi(z_1)} dz_1 \leq 2 \int_0^{+\infty}\left[\Phi'(z_1)-\Phi'(z_1 + s_t)\right]^2dz_1\\
&\leq 2 \int_0^{+\infty}\left[\Phi'(z_1)\right]^2dz_1 = \frac{1}{\sqrt[]{2}} \ .
\end{align*}

\textbf{Interval $[c_2,+\infty)$:}
We proceed in the same way as above, but now applying a change of variable in $z_2$.
\begin{align}
\int^{+\infty}_{c_2}I(x,t)dx &= \int_{-\infty}^{0}\frac{ \left[ \Phi'(z_2-s_t)-\Phi'(z_2)\right]^2  }{\left[\Phi\left( z_2  \right)  - \Phi\left( z_2-s_t  \right)\right] \left[\Phi\left( -z_2  \right)  + \Phi\left( z_2-s_t \right)\right]} dz_2 \nonumber \\
&= \int_{-\infty}^{0} \left(\frac{ \left[ \Phi'(z_2-s_t)-\Phi'(z_2)\right]^2  }{\Phi\left( z_2  \right)  - \Phi\left( z_2-s_t  \right) }+ \frac{ \left[ \Phi'(z_2-s_t)-\Phi'(z_2)\right]^2  }{\Phi\left( -z_2  \right)  + \Phi\left( z_2-s_t \right)}\right) dz_2
\label{upper.semi.interval}\end{align}
We show that both terms in \eqref{upper.semi.interval} are finite. 
For the first one we have
\begin{align*}
\int_{-\infty}^{0}\frac{ \left[ \Phi'(z_2-s_t)-\Phi'(z_2)\right]^2  }{\Phi\left( z_2  \right)  - \Phi\left( z_2-s_t  \right) } dz_2 \leq \int_{-\infty}^{0}\frac{ \left[ \Phi'(z_2-s_t)-\Phi'(z_2)\right]^2  }{\Phi\left( z_2  \right)  - \Phi\left( z_2-s_0  \right) } dz_2 \leq \int_{-\infty}^{0}\frac{ \left[\Phi'(z_2)\right]^2  }{\Phi\left( z_2  \right)  - \Phi\left( z_2-s_0  \right) } dz_2 \ ,
\end{align*}
and applying the same reasoning as before, we conclude that the integral is finite. Then for the second term we have
\begin{align*}
 \int_{-\infty}^{0}\frac{ \left[ \Phi'(z_2-s_t)-\Phi'(z_2)\right]^2  }{\Phi\left( -z_2  \right)  + \Phi\left( z_2-s_t \right)} dz_2 \leq \int_{-\infty}^{0}\frac{ \left[ \Phi'(z_2-s_t)-\Phi'(z_2)\right]^2  }{\Phi\left( -z_2  \right)  } dz_2 \leq 2\int_{-\infty}^{0} \left[\Phi'(z_2)\right]^2 dz_2 = \frac{1}{\sqrt[]{2}} \ .
\end{align*}

\textbf{Interval $(c_1,c_2)$:}
We proceed by applying a change of variable, and we arbitrarily choose to do it in the variable $z_2$. 
We get
\begin{align}
\int_{c_1}^{c_2}I(x,t)dx &= \int_0^{s_t} \frac{[\Phi'(z_2)-\Phi'(z_2-s_t)]^2}{\Phi(z_2) - \Phi(z_2-s_t) }+\frac{[\Phi'(z_2)-\Phi'(z_2-s_t)]^2}{\Phi(-z_2) + \Phi(z_2-s_t) } dz_2
\label{middle.interval}\end{align}
and again we show that both integrals in \eqref{middle.interval} are bounded.
For the first integral we have
\begin{align*}
\int_0^{s_t} \frac{[\Phi'(z_2)-\Phi'(z_2-s_t)]^2}{\Phi(z_2) - \Phi(z_2-s_t) }dz_2&= 2\int_0^{s_t/2} \frac{[\Phi'(z_2)-\Phi'(z_2-s_t)]^2}{\Phi(z_2) - \Phi(z_2-s_t) }dz_2\leq 2\int_0^{s_t/2} \frac{[\Phi'(z_2)]^2}{\Phi(z_2) - \Phi(z_2-s_0) }dz_2\\
&\leq 2\int_0^{\infty} \frac{[\Phi'(z_2)]^2}{\Phi(z_2) - \Phi(z_2-s_0) }dz_2
\end{align*}
where the first equality holds because the function we are integrating is symmetric with respect $s_t/2$. 
The last integral is finite as it is trivially so on $[0,1]$ and using a comparison criteria with the function $f(z)=1/z^2$ 
it is also integrable on $[1,+\infty]$. 
In a similar way we analyze the second integral in \eqref{middle.interval} and by symmetry of the function with respect to the $s_t/2$ we get 
\begin{align*}
\int_0^{s_t}\frac{[\Phi'(z_2)-\Phi'(z_2-s_t)]^2}{\Phi(-z_2) + \Phi(z_2-s_t) } dz_2& =  2\int_0^{s_t/2} \frac{[\Phi'(z_2)-\Phi'(z_2-s_t)]^2}{\Phi(-z_2) + \Phi(z_2-s_t) }dz_2 \ .
\end{align*}
Then we compute the following bound
\begin{align*}
\Phi'(z_2)-\Phi'(z_2-s_t) &= \frac{1}{\sqrt[]{2\pi}}\left[ \exp\left(-\frac{z_2^2}{2}\right) -\exp\left(-\frac{(z_2-s_t)^2}{2}\right)\right] =\frac{1}{\sqrt[]{2\pi}}\exp\left(-\frac{z_2^2}{2}\right)\left[1-\exp\left(-\frac{s_t^2-2z_2s_t}{2}\right)\right]\\
&\leq \frac{1}{\sqrt[]{2\pi}}\exp\left(-\frac{z_2^2}{2}\right) \ ,
\end{align*}
where the  last inequality holds because $s_t^2-2z_2s_t\geq 0$ as $0\leq z_2\leq s_t/2$.
\begin{align*}
\int_0^{s_t}\frac{[\Phi'(z_2)-\Phi'(z_2-s_t)]^2}{\Phi(-z_2) + \Phi(z_2-s_t) } dz_2 &\leq \sqrt[]{\frac{2}{\pi}} \int_0^{s_t/2} \frac{\exp\left(-z_2^2\right)}{\Phi(-z_2) + \Phi(z_2-s_t) }dz_2 \leq  \int_0^{s_t/2} \frac{\exp\left(-z_2^2\right)}{\Phi(-z_2) }dz_2 \\
&\leq \int_0^1 \frac{\exp\left(-z_2^2\right)}{\Phi(-z_2) }dz_2 + \int_1^{+\infty} \frac{\exp\left(-z_2^2\right)}{\Phi(-z_2) }dz_2 \ .
\end{align*}
The first integral is trivially bounded. For the second to be bounded, we apply a comparison criteria with the function $f(z) = 1/z^2$. Putting together the given bounds we may bound the integral in \eqref{middle.interval} and the proof is finished.
\end{proof}

We can now state the main result that solves the Conjecture 4.9 of \cite{karatzas2}.

\begin{theorem}
When $\mathbf{L} =  \bOne \{ P_1(1)\in [p_1,p_2)\}$, the value of the insider information is finite, that is $V_1^{\bG} < \infty$.
\end{theorem}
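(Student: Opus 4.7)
The plan is to leverage Lemma \ref{lem:bound.I.finite} together with the equivalence \eqref{finite.VG} of the Characterization Lemma: it suffices to exhibit an information drift $\alpha$ satisfying conditions 1--3 of that lemma and for which $\EE\int_0^1\alpha^2(u)\,du<\infty$.

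First, I would identify $\alpha$ explicitly via the standard initial-enlargement construction for a Bernoulli random variable. Setting $Y_t \defeq \int_0^t\sigma(u)\,dw(u)$, the event $\{\mathbf{L}=1\}$ coincides with $\{Y_1\in[c_1,c_2)\}$, and since $Y_1-Y_t\sim N(0,\rho-\tau)$ is independent of $\cF_t$, the $\bF$-martingales $q_t^\ell\defeq\PP(\mathbf{L}=\ell\mid\cF_t)$ read $q_t^1=\Phi(z_2)-\Phi(z_1)$ and $q_t^0=\Phi(-z_2)+\Phi(z_1)$, with $z_1,z_2$ as in Lemma \ref{lem:bound.I.finite} evaluated at $x=Y_t$. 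It\^o's formula (the drift vanishing by the martingale property) yields $dq_t^1=\beta_t^1\,dw(t)$ with $\beta_t^1=-\sigma(t)[\Phi'(z_2)-\Phi'(z_1)]/\sqrt{\rho-\tau}$ and $\beta_t^0=-\beta_t^1$. The information drift is then $\alpha(t)=\beta_t^{\mathbf{L}}/q_t^{\mathbf{L}}$, and conditions 1--2 of the Characterization Lemma are immediate from Jacod's theorem for initial enlargement.

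Next, I would bound the second moment. Using the identity $\EE[\bOne\{\mathbf{L}=\ell\}Z]=\EE[q_s^\ell Z]$ for any $\cF_s$-measurable $Z$, together with $\beta_s^0=-\beta_s^1$ and $q_s^0+q_s^1=1$, a short calculation gives $\EE[\alpha^2(s)]=\EE[(\beta_s^1)^2/(q_s^0 q_s^1)]=\frac{\sigma^2(s)}{\sqrt{\rho-\tau}}\EE[I(Y_s,s)]$, with $I$ exactly the function appearing in Lemma \ref{lem:bound.I.finite}. Bounding the Gaussian density of $Y_s\sim N(0,\tau)$ by $1/\sqrt{2\pi\tau}$ and denoting $M\defeq\sup_{t\in[0,1]}\int_\bR I(x,t)\,dx<\infty$ (the content of that lemma), one obtains $\EE[\alpha^2(s)]\leq M\sigma^2(s)/\sqrt{2\pi\,\tau(\rho-\tau)}$. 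The change of variable $u=\tau(s)$ (so $du=\sigma^2(s)\,ds$) then produces $\EE\int_0^1\alpha^2(s)\,ds\leq \frac{M}{\sqrt{2\pi}}\int_0^{\rho}\frac{du}{\sqrt{u(\rho-u)}}=M\sqrt{\pi/2}<\infty$. Condition 3 holds a fortiori, and the equivalence \eqref{finite.VG} closes the argument.

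The genuinely hard step---uniform boundedness of $\int_\bR I(x,t)\,dx$ in $t\in[0,1]$---has already been absorbed by Lemma \ref{lem:bound.I.finite}. What remains, and what is outlined above, is a short It\^o computation, one elementary moment identity for the pair $(\cF_s,\mathbf{L})$, and a Beta-type integral that takes care of the integrable singularities at the endpoints $s=0$ and $s=1$.
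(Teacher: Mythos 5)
Your proposal is correct and follows essentially the same route as the paper: express $\EE[\alpha^2(t)]$ in terms of the function $I(x,t)$, invoke Lemma~\ref{lem:bound.I.finite} for the uniform bound on $\int_\bR I(x,t)\,dx$, and integrate the resulting $1/\sqrt{\tau(\rho-\tau)}$ singularity over $[0,1]$, which the paper does via the bound $\EE[\alpha^2(t)]\leq K/\sqrt{t(1-t)}$ and you do via the equivalent Beta integral $\int_0^\rho du/\sqrt{u(\rho-u)}=\pi$. The only difference is that you re-derive the information drift $\alpha(t)=\beta_t^{\mathbf{L}}/q_t^{\mathbf{L}}$ from the initial-enlargement construction, whereas the paper simply imports it from Equation~(4.25) of \cite{karatzas2}; this makes your write-up more self-contained but does not change the argument.
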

\begin{proof}
By using the expression of $\alpha$, given in \cite[Equation (4.25)]{karatzas2}, we have
$$ \EE[\alpha^2(t)] =  \frac{\sigma^2(t)}{2\pi\sqrt{\rho-\tau}\sqrt{2\pi\tau}} \int_{\mathbb{R}}  I(x,t) \, e^{-x^2/2} \, dx $$
where $I(x,t)$ is defined in \eqref{def.I}.
By \eqref{finite.VG}, it is enough to prove that, for some constant $K>0$,
$$ \EE[\alpha^2(t)] \leq \frac{K}{\sqrt{t(1-t)}} \ . $$
This follows by applying Lemma \ref{lem:bound.I.finite} and noticing that, by the standing assumption that $0<s_1\leq \sigma(u)\leq s_2$ for some $s_1,\, s_2 \in \bR^+$ and $u\in[0,1]$, we have that 
$$ s_1(1-t) < \rho - \tau = \int_t^1 \sigma^2(u) \, du < s_2(1-t) \ .$$
\end{proof}

\end{document}